
\documentclass[final]{siamltex}

\usepackage{graphicx,amsmath,amsfonts,cite}
\usepackage[title,titletoc,toc]{appendix}

\newtheorem{prop}{Proposition}
\newtheorem{teo}{Theorem}
\newtheorem{rem}{Remark}
\newtheorem{alg}{Algorithm}
\newtheorem{lem}{Lemma}

\newcommand{\xx}{\boldsymbol}
\newcommand{\yy}{\displaystyle}



\def\M3as{Mathematical Models and Methods in Applied Sciences}


\title{On the stability of a loosely-coupled scheme based on a Robin interface condition for fluid-structure interaction\thanks{C. Vergara has been partially
supported by the H2020-MSCA-ITN-2017, EU project 765374 "ROMSOC - Reduced Order
Modelling, Simulation and Optimization of Coupled systems''.}}


\author{Giacomo Gigante\thanks{Dipartimento di Ingegneria Gestionale, dell'Informazione e della Produzione, 
Universit\`a degli Studi di Bergamo, Italy, ({\tt giacomo.gigante@unibg.it})}
\and Christian Vergara\thanks{LABS, Dipartimento di Chimica, Materiali e Ingegneria Chimica "Giulio Natta", Politecnico di Milano, Italy, ({\tt christian.vergara@polimi.it})}
}

\begin{document}

\maketitle

\begin{abstract}
We consider a loosely coupled algorithm for fluid-structure interaction based on a Robin
interface condition for the fluid problem (explicit Robin-Neumann scheme).
We study the dependence of the stability of this method on the interface parameter in the Robin condition.
In particular, for a model problem we find sufficient conditions for instability and stability of the method.
In the latter case, we found a stability condition relating the time discretization parameter, 
the interface parameter, and the added mass effect. Numerical experiments confirm the theoretical findings
and highlight optimal choices of the interface parameter that guarantee an accurate solution
with respect to an implicit one.
\end{abstract}

\begin{keywords} 
Fluid structure interaction, loosely coupled algorithms, Robin interface conditions, added mass effect
\end{keywords}

\begin{AMS}
65N12, 65N30
\end{AMS}

\pagestyle{myheadings}
\thispagestyle{plain}
\markboth{G. GIGANTE, C. VERGARA}{STABILITY OF A LOOSELY-COUPLED SCHEME FOR FSI}

\section{Introduction}
\label{sec:intro}
Loosely-coupled schemes (also known as {\sl explicit}) are a very attractive strategy for the numerical solution of the fluid-structure
interaction (FSI) problem. Indeed, they are based on the solution of just one fluid and one structure problem
at each time step, thus allowing a big improvement in the computational costs in comparison to fully-coupled ({\sl implicit}) partitioned procedures 
and monolithic schemes. Another interesting feature of such schemes is that pre-existing fluid and structure
solvers could be often employed.  

For these reasons, loosely-coupled schemes have been widely used in many engineering applications
such as aeroelasticity \cite{parkf1,pipernof1,farhatv1}. However, the stability properties of such schemes
deteriorate when the so-called {\sl added mass effect} becomes relevant. This happens, in particular, when the fluid and structure densities
are comparable, as happens in hemodynamics \cite{quarteronim1}. For example, in \cite{causing1} it has been proven that 
the classical explicit {\sl Dirichlet-Neumann} scheme is unconditionally unstable in the hemodynamic regime, see also \cite{forsterw1,nobilev2}. 

In the recent years, there has been a growing interest in partitioned procedures that are based on
{\sl Robin interface conditions}. The latter are obtained by considering linear combinations of the standard interface conditions owing to the introduction
of suitable parameters. The choice of such parameters is crucial for accelerating the convergence of implicit schemes
\cite{badian1,badian2,astorinoc1,gerardon1,gigantev1}.
Some works focused then on the design of stable loosely-coupled schemes for large added mass effect,
which are based on Robin interface conditions
\cite{nobilev1,guidobonig1,fernandez1,fernandezm3,bukacc1,bukacc2,lukacovar1,banksh1,burmanf5}.
These studies proposed specific values of the interface parameters which guarantee good stability properties
(possibly in combination with suitable stabilizations). 

In this paper, the explicit {\sl Robin-Neumann} scheme, obtained by equipping the fluid subproblem with a Robin condition with parameter $\alpha$ and
the structure one with a Neumann condition, is considered. In particular, it is investigated how the choice of the interface parameter $\alpha$
influences the stability of the numerical solution. To this aim, two analyses on a simplified problem are performed, the first one determining sufficient conditions for instability of the scheme,
whereas the second one sufficient conditions for its stability.
This will allow us to understand the dependence of stability and instability on the physical and numerical parameters and to 
properly design stable loosely-coupled schemes which could be easily implemented also by means of available (even commercial) solvers. 

To validate the theoretical findings found in the analysis reported in Section 2, 
in Section 3 we eventually present the results of some numerical experiments,
where the issue of accuracy is also discussed by proposing some ''optimal''
value of the interface parameters.

\section{Position of the problem}

\subsection{The fluid-structure interaction problem}
We introduce in what follows the simplified fluid-structure interaction problem considered in the analyses below, see \cite{causing1}.
We consider the 2D fluid domain $\Omega^f$ which is a rectangle $R\times L$, where $R$ is denoted the ``radius'' and $L$ the length of
the domain. Its boundary is given by $\partial\Omega^f=\Gamma\bigcup\Sigma$, where $\Sigma$ is the part where the interaction 
with the structure occurs, see Figure \ref{fig:domain}.
For the structure, we consider a 1D model in the domain $\Omega^s=\Sigma$. 
\begin{figure}[!h]
\centering
\includegraphics[width=8cm,height=3.5cm]{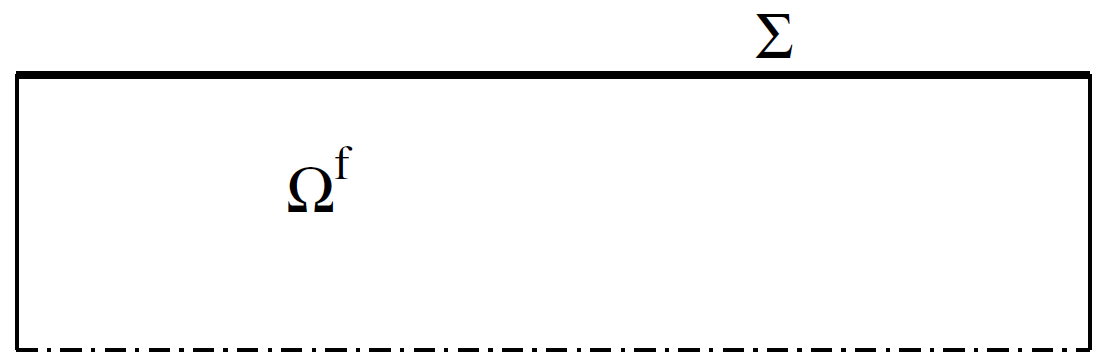}
\caption{Fluid and structure domains for the simplified fluid-structure interaction problem.}
\label{fig:domain}
\end{figure}

For the fluid modeling, we consider a linear incompressible inviscid problem, whereas for the structure the independent rings model \cite{quarteronit1}.
The displacement could happen only in the radial direction.
Moreover, we assume small displacements so that the structure deformation is negligible 
and the fluid domain can be considered fixed.
Thus, we have the following FSI problem:\\
Find fluid velocity $\xx u$, fluid pressure $p$, and structure displacement $\eta$, such that 
\begin{subequations}
\label{eq:fsi}
\begin{align}
& \yy\rho_f\frac{\partial\xx u}{\partial t} + \nabla p = 0 & \text{in}\,\, (0,T)\times\Omega^f,\\
& \nabla\cdot\xx u = 0 & \text{in}\,\, (0,T)\times\Omega^f,\\
&\yy\xx u\cdot\xx n = \frac{\partial\eta}{\partial t} & \text{in}\,\, (0,T)\times\Sigma,\label{eq:fsi-3}\\
&\yy\rho_sH_s\frac{\partial^2\eta}{\partial t^2} + \beta\eta - \psi\frac{\partial^2\eta}{\partial x^2} = p & \text{in}\,\, (0,T)\times\Sigma,\label{eq:fsi-4}
\end{align}
\end{subequations}
where $\xx n$ is the outward normal, $\rho_f$ and $\rho_s$ the fluid and structure densities, $x$ the axial direction along which $\Sigma$ is located, 
$H_s$ the structure thickness, and $\beta$ and $\psi$ two suitable parameters accounting for the elasticity of the structure. 
Moreover, we have to equip the fluid problem with boundary conditions (for example of homogeneous type) on $\partial\Omega\setminus\Sigma$ and for the tangential component on $\Sigma$ \cite{causing1}.
Condition $\eqref{eq:fsi-3}$ represents a no-slip condition at the interface $\Sigma$ between fluid and structure
({\sl perfect adherence} or {\sl kinematic} condition). Due to the lower space dimension of the structure, the independent rings model \eqref{eq:fsi-4} 
represents also the third Newton law (continuity of the normal stresses or {\sl dynamic} condition).


\subsection{Time discretization and explcit Robin-Neumann scheme}
Denoting by $\Delta t$ the time discretization parameter, $v^n$ the approximation of
$v(t^n),\,t^n=n\Delta t$, and setting $u^n=(\xx u^n\cdot\xx n)|_{\Sigma}$, we have the following discretized-in-time version of problem \eqref{eq:fsi}:
Find for each $n$, $\xx u^n,\,p^n$, and $\eta^n$, such that 
\begin{subequations}
\label{eq:fsi-discr}
\begin{align}
& \yy\rho_f\frac{\xx u^n-\xx u^{n-1}}{\Delta t} + \nabla p^n = 0 & \text{in}\,\,\Omega^f,\label{eq:fsi-discr-1}\\
& \nabla\cdot\xx u^n = 0 & \text{in}\,\, \Omega^f,\label{eq:fsi-discr-2}\\
&\yy u^n = \frac{\eta^n-\eta^{n-1}}{\Delta t} & \text{on}\,\, \Sigma,\label{eq:fsi-discr-3}\\
&\yy\rho_sH_s\delta_{tt}\eta^{n} + \beta\eta^n - \psi\frac{\partial^2\eta^n}{\partial x^2} = p^n & \text{in}\,\,\Sigma.\label{eq:fsi-discr-4}
\end{align}
\end{subequations}
Notice that we have considered a backward Euler approximation for the fluid problem 
and we indicated with $\delta_{tt}$ the approximation of the second derivative in time for the structure
problem. 

Introducing $\alpha> 0$, we can substitute in \eqref{eq:fsi-discr} the kinematic condition \eqref{eq:fsi-discr-3}
with the following linear combination obtained with the dynamic condition \eqref{eq:fsi-discr-4}:
\begin{equation}
\label{eq:fsi-discr-3b}
- \alpha u^n + p^n = - \yy \alpha\frac{\eta^n-\eta^{n-1}}{\Delta t} + \rho_sH_s\delta_{tt}\eta^{n}
+ \beta\eta^n - \psi\frac{\partial^2\eta^n}{\partial x^2} \quad \text{on}\,\,\Sigma.
\end{equation}
Of course, the solution of problem \eqref{eq:fsi-discr-1}-\eqref{eq:fsi-discr-2}-\eqref{eq:fsi-discr-3b}-\eqref{eq:fsi-discr-4} coincides with that
of \eqref{eq:fsi-discr}.

Now, the idea is to consider a partitioned method where condition \eqref{eq:fsi-discr-3b} is given 
to the fluid problem, whereas \eqref{eq:fsi-discr-4} is in fact the structure problem. 
Since the fluid problem has been discretized with an implicit method, in \eqref{eq:fsi-discr-3b}
we use the following approximation of the second derivative
\[
\yy\delta_{tt}\eta^n = \rho_sH_s\frac{\eta^{n}-2\eta^{n-1}+\eta^{n-2}}{\Delta t^2}. 
\]
Instead, for the structure problem \eqref{eq:fsi-discr-4} we use the explicit leap-frog approximation
\[
\yy\delta_{tt}\eta^n = \rho_sH_s\frac{\eta^{n+1}-2\eta^{n}+\eta^{n-1}}{\Delta t^2}. 
\]
We observe that, due to the explicit time discretization of the structure problem, fluid and structure are in fact decoupled and, accordingly, we can introduce the following algorithm.\\
\newpage
\begin{alg}
{\bf Explicit Robin-Neumann algorithm.}
\label{alg1}
Given $\xx u^0,\,\eta^1,\,\eta^0$, for $n\ge 1$, at time step $t^n$:
\begin{enumerate}
\item Solve the fluid problem with a Robin condition at the interface $\Sigma$:
\begin{subequations}
\label{eq:fsi-discr-alg}
\begin{align}
& \yy\rho_f\frac{\xx u^n-\xx u^{n-1}}{\Delta t} + \nabla p^n = 0 & \text{in}\,\,\Omega^f,\\
& \nabla\cdot\xx u^n = 0 & \text{in}\,\, \Omega^f,\\
&\yy- \alpha u^n + p^n = - \yy \alpha\frac{\eta^n-\eta^{n-1}}{\Delta t} + \rho_sH_s\frac{\eta^{n}-2\eta^{n-1}+\eta^{n-2}}{\Delta t^2} + \beta\eta^n - \psi\frac{\partial^2\eta^n}{\partial x^2} & \text{on}\,\, \Sigma;
\label{eq:fsi-discr-alg-3}
\end{align}
\end{subequations}

\item Then, solve the structure problem (Neumann
condition at the interface):
\begin{equation}
\label{eq:fsi-discr-alg2}
\yy\rho_sH_s\frac{\eta^{n+1}-2\eta^{n}+\eta^{n-1}}{\Delta t^2} + \beta\eta^n - \psi\frac{\partial^2\eta^n}{\partial x^2} = p^n \quad  \text{in}\,\,\Sigma.
\end{equation}

\end{enumerate}
\end{alg}


In the next section, we study how the stability of the previous algorithm is affected by the choice of the  parameter $\alpha$.

\section{Stability analysis}

\subsection{Preliminaries}
First, we notice that using the previous algorithm, the discrete kinematic condition \eqref{eq:fsi-discr-3}
is not satisfied anymore. Indeed, from \eqref{eq:fsi-discr-alg2} we have
\[
\yy\beta\eta^n - \psi\frac{\partial^2\eta^n}{\partial x^2} - p^n
= - \rho_sH_s\frac{\eta^{n+1}-2\eta^{n}+\eta^{n-1}}{\Delta t^2}, 
\]
where it is understood from now on that the equalities we derive hold true at the interface $\Sigma$.
By introducing the latter expression in \eqref{eq:fsi-discr-alg-3}, we obtain
\[
\yy- \alpha u^n = - \yy \alpha\frac{\eta^n-\eta^{n-1}}{\Delta t} + \rho_sH_s\frac{\eta^{n}-2\eta^{n-1}+\eta^{n-2}}{\Delta t^2} - \rho_sH_s\frac{\eta^{n+1}-2\eta^{n}+\eta^{n-1}}{\Delta t^2}, 
\]
which leads to 
\begin{equation}
\label{eq:kinematic}
\yy u^n = \frac{\eta^n-\eta^{n-1}}{\Delta t} + \rho_sH_s\frac{\eta^{n+1}-3\eta^{n}+3\eta^{n-1}-\eta^{n-2}}{\alpha\Delta t^2}.
\end{equation}
The latter equality provides a ''correction'' of the discrete kinematic condition \eqref{eq:fsi-discr-3}
as a consequence of the explicit treatment.

Following then \cite{causing1}, we consider the {\sl added mass operator} $\mathcal M:H^{-1/2}(\Sigma)\to H^{1/2}(\Sigma)$, 
which allows us to write the following relation between fluid pressure and velocity
at the interface, under the assumption of null external pressure:
\begin{equation}
\label{eq:added}
\yy p=-\rho_f\mathcal M\left(\frac{\partial(\xx u\cdot\xx n)}{\partial t}\right)\quad \text{in}\,\, (0,T)\times\Sigma.
\end{equation}
At the time discrete level, we have
\[
\yy p^n =-\rho_f\mathcal M\left(\frac{u^n-u^{n-1}}{\Delta t}\right).
\]
Inserting \eqref{eq:kinematic} for both $u^n$ and $u^{n-1}$, we obtain
\begin{equation}
\label{eq:added-mass}
\yy p^n =-\rho_f\mathcal M\left(\frac{\eta^{n}-2\eta^{n-1}+\eta^{n-2}}{\Delta t^2}
+ \frac{\rho_sH_s}{\alpha\Delta t^3}\left(\eta^{n+1}-4\eta^{n}+6\eta^{n-1}-4\eta^{n-2}+\eta^{n-3}\right)
\right),
\end{equation}
which gives a relation between pressure and displacement at the interface.

We can write $\eta^m$ for any $m$ as a linear combination of the $L^2$ orthonormal basis functions
$\left\{g_i(x)=\sqrt{2/L}\sin\left(\frac{i\pi x}{L}\right)\right\}$:
\[
\eta^m(x) = \sum_{i=1}^\infty\eta^m_ig_i(x),
\]
for suitable coefficients $\eta^m_i$, see \cite{causing1,badian1}. Notice that 
$g_i$ are eigenfunctions of both the added mass operator $\mathcal M$
and of the Laplace operator $\mathcal L=-b\,\partial_{xx}|_\Sigma$,
with eigenvalues given by 
\[
\yy\mu_i = \frac{L}{i\pi\tanh\left(\frac{i\pi R}{L}\right)},\quad \yy\lambda_i = \left(\frac{i\pi}{L}\right)^2,
\]
respectively.

It is useful for later purposes to highlight that the eigenvalues $\mu_i$ and $\lambda_i$ of the discrete versions of the operators 
$\mathcal M$ and $\mathcal L$ obtained with finite elements, feature the following properties \cite{causing1,badian1}:
\begin{equation}
\label{eq:eigen}
\yy \mu_{min}  \sim h,\qquad \lambda_{max} \sim h^{-2},\qquad\mu_{max}\sim h^0,
\end{equation}
where $h$ is the space discretization parameter.

\subsection{Sufficient conditions for instability}
We present in what follows a first result that provides sufficient conditions
that guarantee conditional instability of the explicit Robin-Neumann scheme. This results generalizes
the one proven in \cite{causing1} about the unconditional instability of the Dirichlet-Neumann scheme
($\alpha\to +\infty$, see Proposition 3 in \cite{causing1}).

\begin{prop}
\label{prop:suff}
The explicit Robin-Neumann scheme is unstable if 
\begin{equation}
\label{eq:unst2}
\rho_{s}H_{s} < \max_{i}\gamma_i,\quad \gamma_i=\alpha\Delta t\frac{4\rho_{f}\mu_{i}+\Delta t^{2}\left(
\beta+\psi\lambda_{i}\right)  }{16\rho_{f}\mu_{i}+4\alpha\Delta t}.
\end{equation}
\end{prop}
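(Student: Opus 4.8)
The plan is to reduce the coupled scheme to a scalar recurrence via modal analysis and then locate a root of the associated characteristic polynomial outside the unit disk. First I would expand $\eta^n$, $p^n$, and $u^n$ in the orthonormal basis $\{g_i\}$. Since each $g_i$ is a simultaneous eigenfunction of the added mass operator $\mathcal{M}$ (eigenvalue $\mu_i$) and of $-\psi\,\partial_{xx}|_\Sigma$ (eigenvalue $\psi\lambda_i$), both the structure step \eqref{eq:fsi-discr-alg2} and the pressure--displacement relation \eqref{eq:added-mass} diagonalize: each Fourier coefficient $\eta^n_i$ evolves independently of the others, and the scheme is unstable precisely when some mode $i$ admits a solution growing in $n$.

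Next I would eliminate the pressure coefficient $p^n_i$ between the modal form of \eqref{eq:fsi-discr-alg2} and that of \eqref{eq:added-mass}. Writing $m_s=\rho_sH_s$, $a_i=\rho_f\mu_i$, $k_i=\beta+\psi\lambda_i$, and clearing $\Delta t$, this yields a homogeneous linear recurrence in $\eta^{n+1}_i,\dots,\eta^{n-3}_i$. Substituting the ansatz $\eta^n_i=z^n$ produces a quartic characteristic polynomial $P(z)$ whose leading coefficient $m_s+a_im_s/(\alpha\Delta t)$ is strictly positive. The desired conclusion will follow from showing that, under \eqref{eq:unst2}, $P$ has a real root $z_0<-1$.

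The device I would use to detect such a root is to evaluate $P$ at $z=-1$. Because $P$ has even degree with positive leading coefficient, $P(z)\to+\infty$ as $z\to-\infty$; hence $P(-1)<0$ forces, by the intermediate value theorem, a real root in $(-\infty,-1)$, i.e. a mode $|z_0|>1$ that grows geometrically and destabilizes the scheme. A direct computation gives $P(-1)=4m_s-k_i\Delta t^2-4a_i+16a_im_s/(\alpha\Delta t)$, and rearranging the inequality $P(-1)<0$ for $m_s$ reproduces exactly $\rho_sH_s<\gamma_i$ with $\gamma_i$ as in \eqref{eq:unst2}. Since a single unstable mode suffices, requiring this for the best choice of $i$ gives the stated condition $\rho_sH_s<\max_i\gamma_i$.

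The main obstacle is the bookkeeping in assembling $P(z)$: the fourth-difference stencil $\eta^{n+1}-4\eta^n+6\eta^{n-1}-4\eta^{n-2}+\eta^{n-3}$ coming from the added mass term must be combined consistently with the centered second difference of the structure step, and the signs of the coefficients of $P$ tracked carefully so that $P(-1)$ and its rearrangement into $\gamma_i$ come out correctly. I would also emphasize that the argument only certifies instability, i.e. the existence of one growing mode, and says nothing about the remaining roots; this is precisely why the statement provides a \emph{sufficient}, and not necessary, condition.
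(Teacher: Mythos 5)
Your proposal is correct and follows essentially the same route as the paper: project the displacement onto the eigenbasis of $\mathcal M$ and the Laplacian, eliminate the pressure to get a five-term recurrence with quartic characteristic polynomial, and show that under \eqref{eq:unst2} the polynomial is negative at $-1$ while tending to $+\infty$ as $y\to-\infty$, forcing a real root below $-1$. Your value of $P(-1)$ agrees (up to a positive normalization factor) with the paper's $\chi(-1)$, and the rearrangement into $\gamma_i$ is the same.
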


\begin{proof}

We start by inserting in the interface condition \eqref{eq:fsi-discr-alg2}
the expression of $p^n$ given by \eqref{eq:added-mass}, obtaining
\[
\yy\rho_sH_s\frac{\eta^{n+1}-2\eta^{n}+\eta^{n-1}}{\Delta t^2} + \beta\eta^n - \psi\frac{\partial^2\eta^n}{\partial x^2} +
\]
\[
\yy\rho_f\mathcal M\left(\frac{\eta^{n}-2\eta^{n-1}+\eta^{n-2}}{\Delta t^2}
+ \frac{\rho_sH_s}{\alpha\Delta t^3}\left(\eta^{n+1}-4\eta^{n}+6\eta^{n-1}-4\eta^{n-2}+\eta^{n-3}\right)\right)=0.
\]
Notice that the previous is a relation in the discrete structure displacement solely.
Multiplying it by the basis function $g_i$ and integrating over the interface $\Sigma$,
we obtain 
\begin{align*}
& \yy \frac{\rho_sH_s}{\Delta t^2}\left (1+\frac{\rho_f\mu_i}{\alpha\Delta t}\right)\eta_i^{n+1}
+ \left(-\frac{2\rho_sH_s}{\Delta t^2} + \beta + \psi\lambda_i + \frac{\rho_f\mu_i}{\Delta t^2} 
- \frac{4\rho_sH_s\rho_f\mu_i}{\alpha\Delta t^3}\right)\eta_i^n + \\
& \left( \frac{\rho_s H_s}{\Delta t^2} - 2\frac{\rho_f\mu_i}{\Delta t^2} 
+ \frac{6\rho_sH_s\rho_f\mu_i}{\alpha\Delta t^3} \right)\eta_i^{n-1}
+ \left( \frac{\rho_f\mu_i}{\Delta t^2} - \frac{4\rho_sH_s\rho_f\mu_i}{\alpha\Delta t^3}\right)\eta_i^{n-2}
+ \frac{\rho_sH_s\rho_f\mu_i}{\alpha\Delta t^3}\eta_i^{n-3} = 0.
\end{align*}
By multiplying the last identity by $\frac{\alpha\Delta t}{\rho_f\mu_i}$,
we obtain the following characteristic polynomial corresponding to the previous difference equation:
\begin{equation}
\label{eq:chi}
\left.
\begin{array}{ll}
\chi(y)  = & \yy \frac{\rho_sH_s}{\Delta t^2}\left (1+\frac{\alpha\Delta t}{\rho_f\mu_i}\right)y^4
+ \left(-\frac{2\alpha\rho_sH_s}{\rho_f\mu_i\Delta t} 
+ \frac{\alpha\Delta t}{\rho_f\mu_i}\left(\beta + \psi\lambda_i\right) + \frac{\alpha}{\Delta t} 
- \frac{4\rho_sH_s}{\Delta t^2}\right)y^3 \\
& \yy + \left( \frac{\alpha\rho_s H_s}{\rho_f\mu_i\Delta t} - 2\frac{\alpha}{\Delta t} 
+ \frac{6\rho_sH_s}{\Delta t^2} \right)y^2
+ \left( \frac{\alpha}{\Delta t} - \frac{4\rho_sH_s}{\Delta t^2}\right) y
+ \frac{\rho_sH_s}{\Delta t^2}.
\end{array}
\right.
\end{equation}
Now, we compute the value of the previous polynomial for $y=-1$:
\begin{align*}
\chi(-1) & =  \frac{16\rho_sH_s}{\Delta t^2} + \frac{4\alpha\rho_s H_s}{\rho_f\mu_i\Delta t}
- \frac{4\alpha}{\Delta t} -\frac{\alpha\Delta t}{\rho_f\mu_i}\left(\beta + \psi\lambda_i\right)\\
 & = \frac{\alpha}{\rho_f\mu_i\Delta t}\left(	4\rho_sH_s - 4\rho_f\mu_i -\Delta t^2 \left(\beta + \psi\lambda_i\right) \right) + \frac{16\rho_sH_s}{\Delta t^2}.
\end{align*}
It follows that $\chi(-1)<0$ under condition \eqref{eq:unst2}.
Since $\lim_{y\to-\infty}\chi(y)=+\infty$, it follows that in this case there exists
at least one real root $\bar y<-1$ of the polynomial associated to the difference equation, 
implying that the method is unstable.  
\end{proof}

\subsection{Sufficient conditions for stability}
We discuss in the following result some sufficient conditions that guarantee
that the explicit Robin-Neumann scheme is conditionally stable. The idea is to start again from 
the polynomial \eqref{eq:chi} and discuss when 
its four roots have all modulus less than 1. 

To this aim, we first introduce
the following version of the implicit function theorem.

\begin{teo}
\label{Dini}Let $f\in\mathcal{C}^{1}\left(  \mathbb{R}^{2}\right)  $ and
suppose that for all $x\in\Omega$, an open interval, and for all
\[
y\in\left(  \varphi_{1}\left(  x\right)  ,\varphi_{2}\left(  x\right)
\right),
\]
where $\varphi_{1},\varphi_{2}:\Omega\rightarrow\mathbb{R}$ are continuous
functions, we have either
\begin{equation}
\label{eq:dini0}
\frac{\partial f}{\partial y}\left(  x,y\right)  \geq b\left(  x\right)  >0
\end{equation}
or%
\[
\frac{\partial f}{\partial y}\left(  x,y\right)  \leq b\left(  x\right)  <0,
\]
for some continuous function $b:\Omega\rightarrow\mathbb{R}$. Let
$g:\Omega\rightarrow\mathbb{R}$ be such that for all $x\in\Omega$%
\begin{subequations}
\begin{align}
g\left(  x\right)   &  \in\left(  \varphi_{1}\left(  x\right)  ,\varphi
_{2}\left(  x\right)  \right), \label{eq:dini1}\\
g\left(  x\right)  -\frac{f\left(  x,g\left(  x\right)  \right)  }{b\left(
x\right)  } &  \in\left(  \varphi_{1}\left(  x\right)  ,\varphi_{2}\left(
x\right)  \right)  .\label{eq:dini2}
\end{align}
\end{subequations}
Then, there is a unique function $\xi:\Omega\rightarrow\mathbb{R}$ such that, for
all $x\in\Omega$, $\xi\left(  x\right)  \in\left(  \varphi_{1}\left(  x\right)
,\varphi_{2}\left(  x\right)  \right)  $ and $f\left(  x,\xi\left(  x\right)
\right)  =0.$ Furthermore, for all $x\in\Omega$
\[
\left\vert \xi\left(  x\right)  -g\left(  x\right)  \right\vert \leq\left\vert
\frac{f\left(  x,g\left(  x\right)  \right)  }{b\left(  x\right)  }\right\vert
.
\]

\end{teo}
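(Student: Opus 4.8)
The plan is to read this as a quantitative root-finding statement and to interpret the quantity $y_1 := g(x) - f(x,g(x))/b(x)$ appearing in \eqref{eq:dini2} as a single damped-Newton step in the $y$ variable, with the slope $b(x)$ playing the role of the true derivative. The whole argument is carried out pointwise in $x$, so I would fix $x\in\Omega$ throughout and abbreviate $I=\left(\varphi_1(x),\varphi_2(x)\right)$.

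First I would reduce to a single sign case. If instead $\partial_y f\le b<0$ on the strip, then replacing $f$ by $-f$ and $b$ by $-b$ leaves the hypotheses \eqref{eq:dini1}--\eqref{eq:dini2}, the Newton iterate $g(x)-f(x,g(x))/b(x)$, and the zero set of $f$ all unchanged, while turning the derivative bound into $\partial_y f\ge b>0$. So assume $\partial_y f(x,y)\ge b(x)>0$ for all $y\in I$. In particular $f(x,\cdot)$ is strictly increasing on $I$, which immediately yields uniqueness: $f(x,\cdot)$ has at most one zero in $I$, so $\xi(x)$, if it exists, is determined, and the function $\xi$ is unique.

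For existence I would exploit the Newton step together with the mean value theorem. By \eqref{eq:dini1}--\eqref{eq:dini2} both $g(x)$ and $y_1$ lie in $I$; since $I$ is an interval, the closed segment joining them is contained in $I$, so the derivative bound is available there. Writing $f(x,g(x))-f(x,y_1)=\partial_y f(x,\theta)\,(g(x)-y_1)$ for some $\theta$ on that segment and using $\partial_y f(x,\theta)\ge b(x)$ together with $g(x)-y_1=f(x,g(x))/b(x)$, a short computation shows that $f(x,y_1)$ has the opposite sign to $f(x,g(x))$ (or that both vanish). Hence $f(x,\cdot)$ changes sign between $g(x)$ and $y_1$, and by the intermediate value theorem it has a zero $\xi(x)$ in the closed segment $[\,\min(g(x),y_1),\max(g(x),y_1)\,]\subseteq I$, giving the required $\xi(x)\in I$ with $f(x,\xi(x))=0$.

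Finally, the error estimate comes for free from the bracketing: since $\xi(x)$ lies between $g(x)$ and $y_1$, one gets $\left|\xi(x)-g(x)\right|\le\left|g(x)-y_1\right|=\left|f(x,g(x))/b(x)\right|$. The only genuinely delicate point is the one flagged above, namely ensuring that the \emph{entire} segment between $g(x)$ and the iterate $y_1$ stays inside the strip $I$, so that the bound $\partial_y f\ge b(x)$ applies on it; this is precisely what hypotheses \eqref{eq:dini1}--\eqref{eq:dini2} are engineered to guarantee, and it is what makes the overshoot of the linear step $y_1$ controlled rather than merely formal. Everything else (monotonicity for uniqueness, the mean value theorem for the sign change, the intermediate value theorem for existence) is routine.
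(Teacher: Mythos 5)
Your proposal is correct and follows essentially the same route as the paper's proof: reduce to the increasing case by flipping signs, get uniqueness from strict monotonicity, show that the Newton iterate $y_1=g(x)-f(x,g(x))/b(x)$ brackets a sign change so the intermediate value theorem yields the root, and read off the error bound from the bracketing. The only (immaterial) difference is that you invoke the mean value theorem where the paper integrates $\partial f/\partial y$ over $[y_0,y_1]$.
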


\begin{proof}
Let us consider the case $\frac{\partial f}{\partial y}\left(  x,y\right)
\geq b\left(  x\right)  >0$. The other case follows from it after replacing
$f\left(  x,y\right)  $ with $-f\left(  x,y\right)  $ and $b\left(  x\right)
$ with $-b\left(  x\right)  $. Fix $x\in\Omega$. By strict monotonicity of the
function $y\mapsto f\left(  x,y\right)  $ in the interval $\left(  \varphi
_{1}\left(  x\right)  ,\varphi_{2}\left(  x\right)  \right)  $, there is at
most one value $\xi\left(  x\right)  \in\left(  \varphi_{1}\left(  x\right)
,\varphi_{2}\left(  x\right)  \right)  $ for which $f\left(  x,\xi\left(
x\right)  \right)  =0$. This proves uniqueness. 

Now, the function $y\mapsto
f\left(  x,y\right)  $ takes the value $f\left(  x,g\left(  x\right)  \right)
$ at $y_{0}=g\left(  x\right)  $. Consider now the point
\[
y_{1}=y_{0}-\frac{f\left(  x,y_{0}\right)  }{b\left(  x\right)  },
\]
and assume without loss of generality that $y_{0}\leq y_{1}$, that is
$f\left(  x,y_{0}\right)  \leq0$. By the hypotheses, we have $\left[
y_{0},y_{1}\right]  \subseteq\left(  \varphi_{1}\left(  x\right)  ,\varphi
_{2}\left(  x\right)  \right)  $, so that for all $y\in\left[  y_{0}%
,y_{1}\right]  $ we have $\frac{\partial f}{\partial y}\left(  x,y\right)
\geq b\left(  x\right)  >0$ and%
\begin{align*}
f\left(  x,y_{1}\right)   &  =\int_{y_{0}}^{y_{1}}\frac{\partial f}{\partial
y}\left(  x,t\right)  dt+f\left(  x,y_{0}\right)  \\
&  \geq b\left(  x\right)  \left(  y_{1}-y_{0}\right)  +f\left(
x,y_{0}\right)  \\
&  =b\left(  x\right)  \left(  y_{0}-\frac{f\left(  x,y_{0}\right)  }{b\left(
x\right)  }-y_{0}\right)  +f\left(  x,y_{0}\right)  =0.
\end{align*}
By the intermediate value theorem, there is a point $\xi\left(  x\right)
\in\left[  y_{0},y_{1}\right]  $ such that $f\left(  x,\xi\left(  x\right)
\right)  =0$. This prove existence. 

Finally,
\[
y_{0}\leq \xi\left(  x\right)  \leq y_{1}%
\]
means%
\[
g\left(  x\right)  \leq \xi\left(  x\right)  \leq g\left(  x\right)
-\frac{f\left(  x,g\left(  x\right)  \right)  }{b\left(  x\right)  }%
\]
so that
\[
\left\vert \xi\left(  x\right)  -g\left(  x\right)  \right\vert \leq\left\vert
\frac{f\left(  x,g\left(  x\right)  \right)  }{b\left(  x\right)  }\right\vert,
\]
proving the last part of the theorem.
\end{proof}
\\
\\
Now, we are ready to introduce the main result of this work. 

\begin{teo}
\label{t1}
For all positive $\rho_{s}$, $\rho_{f}$, $\alpha$, $H_{s}$ and for all finite
sequences of positive eigenvalues $\left\{  \mu_{i}\right\}  _{i=1}^{N}$,
$\left\{  \lambda_{i}\right\}  _{i=1}^{N}$, there exists a positive $\delta$
such that, if $0<\Delta t<\delta$, then, for all $i=1,\ldots,N$, the polynomial
\eqref{eq:chi}
has four simple roots in the open unit disc in the complex plane.
\end{teo}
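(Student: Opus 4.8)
The plan is to treat \eqref{eq:chi} as a small perturbation of a fixed limiting polynomial and to locate its four roots by a careful asymptotic analysis as $\Delta t\to 0$. First I would clear the factor $\rho_sH_s/\Delta t^2$ and introduce the dimensionless quantities $a=\alpha\Delta t/(\rho_f\mu_i)$, $b=\alpha\Delta t/(\rho_sH_s)$ and $c=\Delta t^2(\beta+\psi\lambda_i)/(\rho_sH_s)$, which satisfy $a,b=O(\Delta t)$ and $c=O(\Delta t^2)$ uniformly over the finite index set $i=1,\dots,N$. A direct computation then rewrites the normalized polynomial in the compact form
\[
\widetilde\chi(y)=(y-1)^4+y(y-1)^2(ay+b)+ac\,y^3,
\]
so that $\widetilde\chi(y)\to(y-1)^4$ as $\Delta t\to 0$ and all four roots converge to $y=1$. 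This is precisely where the difficulty lies: the limiting polynomial has a quadruple root sitting \emph{on} the unit circle, so neither a naive continuity argument nor the standard implicit function theorem applies, and the leading-order behaviour alone cannot decide whether the perturbed roots fall inside or outside the disc.

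To turn the disc condition into something tractable I would apply the Cayley transform $z=(1-y)/(1+y)$, which maps the open unit disc onto the open right half-plane and sends $y=1$ to $z=0$. Multiplying by $(1+z)^4$ yields an explicit quartic $P(z)$ whose four roots cluster at the origin and must be shown to have strictly positive real part. Reading off the Newton polygon of $P$ reveals two well-separated scales: two roots of size $O(\sqrt{\Delta t})$, governed at leading order by $16z^2+4(a+b)=0$, and two roots of size $O(\Delta t)$, governed by $4(a+b)z^2+ac=0$. In both cases the leading balance produces \emph{purely imaginary} roots, so $\mathrm{Re}(z)=0$ at leading order and the sign that decides stability emerges only one order further in the expansion.

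The decisive step is therefore to compute the first nonvanishing real part on each branch. Expanding each root about its imaginary leading value and retaining the next term gives, to leading order, a strictly positive real part equal to $a/4$ for the $O(\sqrt{\Delta t})$ pair and to $abc/(4(a+b)^2)$ for the $O(\Delta t)$ pair, both positive for all admissible parameters. To make this rigorous rather than merely formal — the coalescence of all four roots at $z=0$ being the genuine obstacle — I would rescale each branch so that the roots become separated and simple, and then invoke the quantitative implicit function theorem (Theorem \ref{Dini}) to certify a root near each asymptotic prediction together with an error bound small enough to preserve the sign of $\mathrm{Re}(z)$; equivalently, one may check directly that the Routh--Hurwitz inequalities for $P(-z)$ reduce at leading order to the manifestly positive quantities $a+b$ and $b$. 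Either route yields four roots with $\mathrm{Re}(z)>0$, hence four roots of \eqref{eq:chi} strictly inside the unit disc; their simplicity follows at once from the two distinct scales, which keep the roots apart for $\Delta t$ small. Taking $\delta$ to be the smallest threshold over the finite set of indices $i$ then completes the argument.
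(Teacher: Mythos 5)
Your proposal is correct and reaches the conclusion of Theorem \ref{t1} by a route that, while sharing the paper's core strategy (perturbation of the quadruple root sitting on the boundary, two well-separated root scales, a quantitative implicit-function argument to make the formal asymptotics rigorous), differs in two substantive ways. First, where the paper passes to the reciprocal polynomial $P(x)=x^{4}Q(1/x)$, shifts to $x=1+U$, separates $U=u+iv$ into the real system \eqref{eq:system}, eliminates $v$ to get a sextic in $u$, and only then applies Theorem \ref{Dini}, you use the Cayley map to a half-plane and read the two scales $O(\sqrt{\Delta t})$ and $O(\Delta t)$ directly off the Newton polygon; this buys a cleaner bookkeeping of which terms matter at which order. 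Second, your Routh--Hurwitz variant is genuinely more economical for the ``all roots inside the disc'' part: the positivity of the coefficients and of the single nontrivial Hurwitz determinant (which reduces at leading order to $64a^{2}c\,(b-c)>0$) settles it without locating the roots at all, though it does not by itself give simplicity, for which your scale-separation argument (plus the nonvanishing leading imaginary parts within each conjugate pair) is still needed. One caveat on the IFT branch of your plan: Theorem \ref{Dini} is a statement about a real function of one real variable, so ``certifying a root near each asymptotic prediction'' of a complex quartic still requires a reduction to real equations --- either the real/imaginary splitting the paper performs, or the Hurwitz determinant; your second route is therefore the one to prefer. Finally, a comparison of constants: your leading real parts $a/4$ and $abc/\bigl(4(a+b)^{2}\bigr)$ are both consistent with a direct dominant-balance computation on the quartic \eqref{eq:P2}, $U^{4}+AzU^{3}+(A+B_{i})zU^{2}+AC_{i}z^{3}U+AC_{i}z^{3}$, whose large roots are $U=-\tfrac{Az}{2}\pm i\sqrt{(A+B_{i})z}+o(\sqrt{z})$, giving $|1+U|^{2}=1+B_{i}z+o(z)$. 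These values differ from the paper's intermediate quantities $u_{1}=+\tfrac{Az}{2}$ and $v_{1}^{2}=\tfrac{(A+B_{i})z}{3}$; the discrepancy traces to the sign of the leading term of $T_{2}$ in the paper's sextic (a direct expansion of $N^{2}-NED+FD^{2}$ yields $T_{2}=-4(A+B_{i})^{2}z^{2}+O(z^{3})$). Both versions produce a modulus strictly greater than one, so the theorem is unaffected, but your constants are the ones I can confirm numerically and symbolically.
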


\begin{proof}
It is convenient to normalize $\chi\left(  y\right) $, setting
\begin{align*}
Q\left(  y\right)   &  =\frac{\Delta t^{2}}{\rho_{s}H_{s}}\chi\left(
y\right)  \\
&  =\left(  1+B_{i}z\right)  y^{4}-\left(  4+\left(  2B_{i}-A\right)
z-AC_{i}z^{3}\right)  y^{3}+\left(  6+\left(  B_{i}-2A\right)  z\right)
y^{2}-\left(  4-Az\right)  y+1,
\end{align*}
where%
\begin{align*}
z &  =\Delta t>0,\\
A &  =\frac{\alpha}{\rho_{s}H_{s}}>0,\\
B_{i} &  =\frac{\alpha}{\rho_{f}\mu_{i}}>0,\\
C_{i} &  =\frac{1}{\rho_{f}\mu_{i}}\left(  \beta+\psi\lambda_{i}\right)  >0.
\end{align*}
For technical reasons, we will equivalently study the polynomial%
\begin{equation}
\left.
\label{eq:P}
\begin{array}{lll}
\yy P\left(  x\right)   &  = & \yy x^{4}Q\left(  \frac{1}{x}\right)  \\
&  = & x^{4}-\left(  4-Az\right)  x^{3}+\left(  6+\left(  B_{i}-2A\right)
z\right)  x^{2}\\
& & -\left(  4+\left(  2B_{i}-A\right)  z-AC_{i}z^{3}\right)
x+\left(  1+B_{i}z\right),
\end{array}
\right.
\end{equation}
showing that all its four roots have modulus greater than $1$. Observe first
that when $z=0$, $P\left(  x\right)  $ reduces to
\[
x^{4}-4x^{3}+6x^{2}-4x+1=\left(  x-1\right)  ^{4}.
\]
By classical results, this implies that for sufficiently small $z$, 
$P\left(  x\right)  $ has four simple roots as close as desired
to $x=1$ in the complex plane (see \cite{knopp1}, page 122). Unfortunately, this is not sufficient
for our purposes, and for this reason we need a deeper analysis. 

Now set $x=1+U$. This simplifies our formulas, since we look for roots that
are close to $1$. This gives
\begin{equation}
\label{eq:P2}
P\left(  1+U\right)  =U^{4}+AzU^{3}+\left(  B_{i}+A\right)  zU^{2}+AC_{i}%
z^{3}U+AC_{i}z^{3}.
\end{equation}
Recalling that $U$ is a complex variable, we write it as $U=u+iv$ where
$u$ and $v$ are its real and imaginary parts, respectively. Thus, the equation%
\[
\left(  u+iv\right)  ^{4}+Az\left(  u+iv\right)  ^{3}+\left(  B_{i}+A\right)
z\left(  u+iv\right)  ^{2}+AC_{i}z^{3}\left(  u+iv\right)  +AC_{i}z^{3}=0
\]
reduces to the system%
\begin{equation}
\label{eq:system}
\left\{
\begin{array}
[c]{l}%
v^{4}-v^{2}\left(  6u^{2}+3Azu+z\left(  A+B_{i}\right)  \right)
+u^{4}+Azu^{3}+\left(  A+B_{i}\right)  zu^{2} \\
\qquad\qquad+AC_{i}z^{3}u+AC_{i}z^{3}=0,\\
v\left(  -v^{2}\left(  4u+Az\right)  +4u^{3}+3Au^{2}z+2uz\left(
A+B_{i}\right)  +AC_{i}z^{3}\right)  =0.
\end{array}
\right.
\end{equation}
Notice that the solution of the second equation $v=0$ reduces the first
equation to%
\[
u^{4}+\left(  u+1\right)  Azu^{2}+B_{i}zu^{2}+AC_{i}z^{3}\left(  u+1\right)
=0,
\]
which does not have any real solution $u>-1$ (all summands are positive).
Since we look for roots $u+iv$ close to $0$, we disregard the solution of the
second equation $v=0$ and focus on the other solution
\begin{equation}
\label{eq:v2}
v^{2}=\frac{4u^{3}+3Au^{2}z+2uz\left(  A+B_{i}\right)  +AC_{i}z^{3}}{4u+Az},
\end{equation}
which reduces the first equation to
\begin{equation}
T_{6}u^{6}+T_{5}u^{5}+T_{4}u^{4}+T_{3}u^{3}+T_{2}u^{2}+T_{1}u+T_{0}%
=0,
\label{equation_sixth}%
\end{equation}
where%
\begin{align*}
T_{6} &  =-64,\\
T_{5} &  =-96Az,\\
T_{4} &  =32\left(  A+B_{i}\right)  z+48A^{2}z^{2},\\
T_{3} &  =32A\left(  A+B_{i}\right)  z^{2}+8A^{3}z^{3},\\
T_{2} &  =4\left(  A+B_{i}\right)  ^{2}z^{2}+8A\left(  A^{2}+B_{i}%
A-2C_{i}\right)  z^{3}+4A^{2}C_{i}z^{4},\\
T_{1} &  =-2A\left(  A+B_{i}\right)  ^{2}z^{3}+8A^{2}C_{i}z^{4}-2A^{3}%
C_{i}z^{5},\\
T_{0} &  =-A^{2}C_{i}\left(  B_{i}-zC_{i}\right)  z^{5}.
\end{align*}

Since we look for roots that go to $0$ with $z$, we may assume that
$u$ is $O\left(  z\right)  $ as $z\rightarrow0$. The original
equation \eqref{equation_sixth} can therefore be approximated with
\[
T_{2}u^{2}+T_{1}u+T_{0}=0;
\]
further, disregarding all higher order terms in $z$, it can be approximated with
\begin{equation}
\label{eq:teo}
4\left(  A+B_{i}\right)  ^{2}u^{2}-2A\left(  A+B_{i}\right)  ^{2}zu-A^{2}%
z^{3}C_{i}B_{i}=0.
\end{equation}
If $u=O\left(  z\right)  $, then the third term in the above equation 
can be neglected and the same equation 
can be approximated with%
\[
4u-2Az=0,
\]
which gives the approximate solution
\[
u=\frac{A}{2}z.
\]
If $u=O\left(  z^{2}\right)  $, then the first term in equation \eqref{eq:teo} can be neglected
and \eqref{eq:teo} can be approximated
with%
\[
2\left(  A+B_{i}\right)  ^{2}u+Az^{2}C_{i}B_{i}=0,
\]
which gives the approximate solution
\[
u=-\frac{AC_{i}B_{i}}{2\left(  A+B_{i}\right)  ^{2}}z^{2}.
\]

We now need to estimate the derivative with respect to $u$ of
\[
f\left(  z,u\right)  =T_{6}u^{6}+T_{5}u^{5}+T_{4}u^{4}+T_{3}u^{3}+T_{2}%
u^{2}+T_{1}u+T_{0},
\]
that is
\begin{equation}
\label{eq:teo2}
\frac{\partial f}{\partial u}\left(  z,u\right)  =6T_{6}u^{5}+5T_{5}%
u^{4}+4T_{4}u^{3}+3T_{3}u^{2}+2T_{2}u+T_{1}.
\end{equation}

We are now ready to apply Theorem \ref{Dini}, with the two following choices
for $g$ suggested by the previous approximate solutions:%
\begin{align*}
g_{1}(z) &  =\frac{A}{2}z,\\
g_{2}(z) &  =-\frac{AC_{i}B_{i}}{2\left(  A+B_{i}\right)  ^{2}}z^{2}.
\end{align*}

Assume first $u$ close to $g_{1}\left(  z\right)  .$ Precisely, assume that
for some $K_{1}>0$, $\varphi_{1}\left(  z\right)  =g_{1}\left(  z\right)
-K_{1}z^{2}\leq u\leq\varphi_{2}\left(  z\right)  =g_{1}\left(  z\right)
+K_{1}z^{2}$, so that hypothesis \eqref{eq:dini1} in Theorem \ref{Dini}
is satisfied. In particular, $u=g_{1}\left(  z\right)  +O\left(  z^{2}\right)
$ and, from \eqref{eq:teo2},
\begin{align*}
\frac{\partial f}{\partial u}\left(  z,u\right)   &  =8\left(  A+B_{i}\right)
^{2}z^{2}u-2A\left(  A+B_{i}\right)  ^{2}z^{3}+O\left(  z^{4}\right)  \\
&  =8\left(  A+B_{i}\right)  ^{2}z^{2}g_{1}\left(  z\right)  -2A\left(
A+B_{i}\right)  ^{2}z^{3}+O\left(  z^{4}\right)  \\
&  =2A\left(  A+B_{i}\right)  ^{2}z^{3}+O\left(  z^{4}\right)  .
\end{align*}
Thus, for some small positive $\delta_1,$ if $0<z<\delta_1$ and $g_{1}\left(
z\right)  -K_{1}z^{2}\leq u\leq g_{1}\left(  z\right)  +K_{1}z^{2}$, we have
\[
\frac{\partial f}{\partial u}\left(  z,u\right)  \geq A\left(  A+B_{i}\right)
^{2}z^{3}=b_{1}\left(  z\right),
\]
which shows that, with the above choices, hypothesis \eqref{eq:dini0} in Theorem \ref{Dini}
is satisfied.

Now we need to check hypothesis \eqref{eq:dini2} in Theorem \ref{Dini}, that is 
\[
g_{1}\left(  z\right)  -K_{1}z^{2}\leq g_{1}\left(  z\right)  -\frac{f\left(
z,g_{1}\left(  z\right)  \right)  }{b_{1}\left(  z\right)  }\leq g_{1}\left(
z\right)  +K_{1}z^{2},%
\]
or, in other words,
\[
\frac{f\left(  z,g_{1}\left(  z\right)  \right)  }{b_{1}\left(  z\right)
}=O\left(  z^{2}\right)  .
\]
We have:
\begin{align*}
\frac{f\left(  z,g_{1}\left(  z\right)  \right)  }{b_{1}\left(  z\right)  } &
=\frac{4\left(  A+B_{i}\right)  ^{2}z^{2}g_{1}\left(  z\right)  ^{2}-2A\left(
A+B_{i}\right)  ^{2}z^{3}g_{1}\left(  z\right)  +O\left(  z^{5}\right)
}{A\left(  A+B_{i}\right)  ^{2}z^{3}}\\
&  =\frac{\left(  A+B_{i}\right)  ^{2}z^{2}A^{2}z^{2}-A\left(  A+B_{i}\right)
^{2}z^{3}Az+O\left(  z^{5}\right)  }{A\left(  A+B_{i}\right)  ^{2}z^{3}}\\
&  =O\left(  z^{2}\right),
\end{align*}
since the first two terms at numerator vanish.
Thus, it follows from Theorem \ref{Dini} that there exists a unique function
\[
u_1:\left(  0,\delta_1\right)  \rightarrow\left(  \frac{A}{2}z-K_{1}%
z^{2},\frac{A}{2}z+K_{1}z^{2}\right)
\]
such that $f\left(  z,u_1\left(z\right)  \right)  =0$ for all $z\in\left(0,\delta_1\right).$ Furthermore,
for all $z\in\left(  0,\delta_1\right)  $%
\[
\left\vert u_1\left(  z\right)  -\frac{A}{2}z\right\vert \leq\frac{f\left(
z,g_{1}\left(z\right)  \right)  }{b_{1}\left(z\right)  },%
\]
which implies that
\[
u_1\left(z\right)  =\frac{A}{2}z+O\left(  z^{2}\right)
\]
as $z\rightarrow 0^+.$

Similarly, let us assume now $u$ close to $g_{2}\left(  z\right)  .$ Precisely,
assume that for some $K_{2}>0$, $\psi_{1}\left(  z\right)  =g_{2}\left(
z\right)  -K_{2}z^{3}\leq u\leq\psi_{2}\left(  z\right)  =g_{2}\left(
z\right)  +K_{1}z^{3}$. Then, we have $u=g_2(z) + O\left(  z^{3}\right)$ and 
\[
\frac{\partial f}{\partial u}\left(  z,u\right)  =-2A\left(  A+B_{i}\right)
^{2}z^{3}+O\left(  z^{4}\right)  .
\]
Thus, for some small positive $\delta_2,$ if $0<z<\delta_2$ and $g_{2}\left(
z\right)  -K_{2}z^{3}\leq u\leq g_{2}\left(  z\right)  +K_{2}z^{3}$, we have
\[
\frac{\partial f}{\partial u}\left(  z,u\right)  \leq-A\left(  A+B_{i}\right)
^{2}z^{3}=b_{2}\left(  z\right)  .
\]
Now we need to check if
\[
g_{2}\left(  z\right)  -K_{2}z^{3}\leq g_{2}\left(  z\right)  -\frac{f\left(
z,g_{2}\left(  z\right)  \right)  }{b_{2}\left(  z\right)  }\leq g_{2}\left(
z\right)  +K_{2}z^{3}%
\]
or, in other words, if
\[
\frac{f\left(  z,g_{2}\left(  z\right)  \right)  }{b_{2}\left(  z\right)
}=O\left(  z^{3}\right)  .
\]
We have:
\begin{align*}
\frac{f\left(  z,g_{2}\left(  z\right)  \right)  }{b_{2}\left(  z\right)  } &
=\frac{-2A\left(  A+B_{i}\right)  ^{2}z^{3}g_{2}\left(  z\right)  -A^{2}%
z^{5}C_{i}B_{i}+O\left(  z^{6}\right)  }{-A\left(  A+B_{i}\right)  ^{2}z^{3}%
}\\
&  =\frac{z^{3}A^{2}C_{i}B_{i}z^{2}-A^{2}z^{5}C_{i}B_{i}+O\left(
z^{6}\right)  }{A\left(  A+B_{i}\right)  ^{2}z^{3}}\\
&  =O\left(  z^{3}\right)  .
\end{align*}
Thus, it follows from Theorem \ref{Dini} that there is a unique function
\[
u_2:\left(  0,\delta_2\right)  \rightarrow\left(  -\frac{AC_{i}B_{i}%
}{2\left(  A+B_{i}\right)  ^{2}}z^{2}-K_{2}z^{3},-\frac{AC_{i}B_{i}}{2\left(
A+B_{i}\right)  ^{2}}z^{2}+K_{2}z^{3}\right)
\]
 such that $f\left(
z,u_2\left(  z\right)  \right)  =0$ for all $z\in\left(  0,\delta_2\right)  .$
Furthermore, for all $z\in\left(  0,\delta_2\right)  $%
\[
\left\vert u_2\left(  z\right)  +\frac{AC_{i}B_{i}}{2\left(  A+B_{i}\right)
^{2}}z^{2}\right\vert \leq\frac{f\left(  z,g_{2}\left(  z\right)  \right)
}{b_{2}\left(  z\right)  },%
\]
which implies that
\[
u_2\left(  z\right)  =-\frac{AC_{i}B_{i}}{2\left(  A+B_{i}\right)  ^{2}%
}z^{2}+O\left(  z^{3}\right)
\]
as $z\rightarrow 0^+.$

Let us now compute $v^{2}$ by means of \eqref{eq:v2} for these two choices. 
Setting $u=u_1\left(z\right)  =\frac{A}{2}z+O\left(  z^{2}\right)  $, we obtain%
\begin{align*}
v_{1}^{2} &  =\frac{4u_{1}^{3}+3Au_{1}^{2}z+2u_{1}z\left(  A+B_{i}\right)
+AC_{i}z^{3}}{4u_{1}+Az}\\
&  =\frac{Az^{2}\left(  A+B_{i}\right)  +O\left(  z^{3}\right)  }{3Az+O\left(
z^{2}\right)  }\\
&  =\frac{\left(  A+B_{i}\right)  }{3}z+O\left(  z^{2}\right).
\end{align*}
Setting now $u=u_2\left(z\right)  =-\frac{AC_{i}B_{i}%
}{2\left(  A+B_{i}\right)  ^{2}}z^{2}+O\left(  z^{3}\right)  $, we obtain%
\begin{align*}
v_{2}^{2} &  =\frac{4u_{2}^{3}+3Au_{2}^{2}z+2u_{2}z\left(  A+B_{i}\right)
+AC_{i}z^{3}}{4u_{2}+Az}\\
&  =\frac{2z\left(  A+B_{i}\right)  \left(  -\frac{AC_{i}B_{i}}{2\left(
A+B_{i}\right)  ^{2}}z^{2}\right)  +AC_{i}z^{3}+O\left(  z^{4}\right)
}{Az+O\left(  z^{2}\right)  }\\
&  =\frac{\frac{AC_{i}A}{A+B_{i}}z^{3}+O\left(  z^{4}\right)  }{Az+O\left(
z^{2}\right)  }\\
&  =\frac{C_{i}A}{A+B_{i}}z^{2}+O\left(  z^{3}\right).
\end{align*}

Thus, $(u_1,v_1)$ and $(u_2,v_2)$ are solutions of system \eqref{eq:system}. 
This means that $u_1\pm iv_1$ and $u_2\pm iv_2$ are the 4 roots of \eqref{eq:P2}
and $1+u_1\pm iv_1$ and $1+u_2\pm iv_2$ are the four roots of \eqref{eq:P}.

The final step of the proof is to show that these four roots for $P(x)$ have modulus
greater than 1. Accordingly, we have
\[
\left(  1+u_{1}\right)  ^{2}+v_{1}^{2}=1+Az+\frac{\left(  A+B_{i}\right)  }%
{3}z+O\left(  z^{2}\right), \quad\text{as $z\rightarrow 0^+$}.
\]
Thus, there exists a positive $\delta_3$ such that $\left(  1+u_{1}\right)  ^{2}+v_{1}^{2}>1$
for $z<\delta_3$. Moreover, we have
\begin{align*}
\left(  1+u_{2}\right)  ^{2}+v_{2}^{2} &  =\left(  1-\frac{AC_{i}B_{i}%
}{2\left(  A+B_{i}\right)  ^{2}}z^{2}\right)  ^{2}+\frac{C_{i}A}{\left(
A+B_{i}\right)  }z^{2}+O\left(  z^{3}\right)  \\
&  =1-\frac{AC_{i}B_{i}}{\left(  A+B_{i}\right)  ^{2}}z^{2}+\frac{C_{i}%
A}{\left(  A+B_{i}\right)  }z^{2}+O\left(  z^{3}\right)  \\
&  =1+\frac{C_{i}A^{2}}{\left(  A+B_{i}\right)  ^{2}}z^{2}+O\left(
z^{3}\right) ,\quad\text{as $z\rightarrow 0^+$}.
\end{align*}
Thus, there exists a positive $\delta_4$ such that $\left(  1+u_{2}\right)  ^{2}+v_{2}^{2}>1$
for $z<\delta_4$.

This proves that the four roots of $\chi(y)$ in \eqref{eq:chi} have all
modulus strictly less than 1, provided that $\Delta t<\delta=\min\{\delta_1,\delta_2,\delta_3,\delta_4\}$.

\end{proof}

\subsection{Discussion}\label{sec:disc}
The results proven above give us sufficient conditions for instability 
or stability of the explicit Robin-Neumann scheme. In particular, in the last case, we found that,
given $\alpha>0$, for $\Delta t$ small enough this scheme is absolute stable. 

In view of the numerical experiments, 
we discuss hereafter more in detail the hypotheses of the previous results.
To this aim, in what follows we write three conditions that imply \eqref{eq:unst2}: 
\begin{itemize}
\item[i)]
\begin{equation}
\rho_{s}H_{s}<\alpha\Delta t\frac{4\rho_{f}\mu_{\min}+\Delta t^{2}\left(
\beta+\psi\lambda_{\max}\right)  }{16\rho_{f}\mu_{\min}+4\alpha\Delta
t}=\overline{\eta}.\label{instability_two}%
\end{equation}
This is obtained by taking the greatest possible value of $i$ in (\ref{eq:unst2});

\item[ii)]
\begin{equation}
\left\{
\begin{array}
[c]{l}%
\rho_{s}H_{s}<\rho_{f}\mu_{\min}+\Delta t^{2}\left(  \beta+\psi\lambda_{\max
}\right)  /4=\eta_{1}\\
\\
\alpha>\dfrac{16\rho_{f}\mu_{\min}\rho_{s}H_{s}}{\Delta t\left(  4\rho_{f}%
\mu_{\min}+\Delta t^{2}\left(  \beta+\psi\lambda_{\max}\right)  -4\rho
_{s}H_{s}\right)  }=\alpha_{1}.
\end{array}
\right.  \label{instability_two_bis}%
\end{equation}
This is obtained by solving (\ref{instability_two}) in the variable $\alpha$;

\item[iii)]
\begin{equation}
\left\{
\begin{array}
[c]{l}%
\rho_{s}H_{s}<\rho_{f}\mu_{1}=\eta_{2}\\
\\
\alpha>\dfrac{4\rho_{f}\mu_{1}\rho_{s}H_{s}}{\Delta t\left(  \rho_{f}\mu
_{1}-\rho_{s}H_{s}\right)  }=\alpha_{2}.
\end{array}
\right.  \label{instability_three_bis}%
\end{equation}
This is obtained by taking $i=1$ in (\ref{eq:unst2}), deleting the term $\Delta t^{2}\left(
\beta+\psi\lambda_{1}\right)  $ and solving in $\alpha$.
\end{itemize}

\begin{enumerate}
\item {\bf Dependence on $\rho_{s}H_{s}$}. 
By looking at conditions
(\ref{instability_two_bis}) and (\ref{instability_three_bis}), we see that
when $\rho_{s}H_{s}<\max\left(  \eta_{1},\eta_{2}\right)  $, the explicit
Robin-Neumann scheme is unstable if $\alpha$ is large enough. In
particular, for decreasing values of $\rho_{s}H_{s}<\eta_{1}$ (i.e. for
increasing added mass effect), the value of $\alpha_{1}$ decreases, enlarging
the range of $\alpha$ such that the scheme is unstable.
The same argument holds true for $\alpha_2$ when $\rho_{s}H_{s}<\eta_{2}$.

\item {\bf Dependence on $\Delta t$}.
From Theorem \ref{t1}, we have that for any fixed
$\alpha>0$, the explicit Robin-Neumann scheme is stable provided that $\Delta
t$ is small enough. This result is consistent with Proposition 1.
Indeed, for all indices $i$, we have
\[
\lim_{\Delta t\rightarrow0} \gamma_i = 
\lim_{\Delta t\rightarrow0} \alpha\Delta t\frac{4\rho_{f}\mu_{i}+\Delta
t^{2}\left(  \beta+\psi\lambda_{i}\right)  }{16\rho_{f}\mu_{i}+4\alpha\Delta
t}=0.
\]
Accordingly, we have also from \eqref{instability_two_bis}
\[
\lim_{\Delta t\rightarrow0} \alpha_1 = +\infty.
\]

Observe also that, by (\ref{instability_three_bis}), for $\rho_{s}H_{s}<\eta_{2}$
instability of the scheme follows if
\[
\Delta t>\dfrac{4\rho_{f}\mu_{1}\rho_{s}H_{s}}{\alpha\left(  \rho_{f}\mu
_{1}-\rho_{s}H_{s}\right)  }.
\]
This means that in order to have stability according to Theorem \ref{t1} we need at
least $\delta\leq c\alpha^{-1}$. The dependence of $\delta$ on $\alpha$ is
still under study and will be hopefully discussed in future studies.

\item {\bf Dependence on $h$}. By exploiting the behaviour of $\mu_{min}$ and 
$\lambda_{max}$ with respect to $h$, see \eqref{eq:eigen}, 
we find from \eqref{instability_two} that $\lim
_{h\rightarrow0}\overline{\eta}=+\infty$ for $\Delta t$ fixed.
This means that the stability properties of the method deteriorates when 
$h$ decreases. This is justified by the fact that when $h$ becomes small,
the fluid and structure solutions should match a large number of d.o.f.
at the interface
and, due to the implicit treatment for the fluid time discretization and the explicit one for the structure, 
this matching becomes more and more difficult 
for smaller values of $h$.

\item {\bf $\xx\Delta \xx t,\xx h\to \xx 0$}.
From \eqref{eq:eigen}, we have that
if $\Delta t\sim h$, then   
$\lim_{\Delta t,h\to 0}\alpha_1\simeq
\frac{16\rho_sH_s\rho_f}{\psi
+ 4\rho_f-4\rho_sH_s}$.
When the added mass effect is large enough ($\rho_sH_s$ small enough), this limit is 
positive and bounded, unlike the case $\Delta t\to 0, h$ fixed (cf. point 2).
This means that in this case we have instability for a wide range of values of $\alpha$
even as $\Delta t\to 0$. This suggests that 
the value of $\delta$ in Theorem \ref{t1} should be smaller (up to a constant) than $h$.
Indeed, we have the following result.
\begin{lem}
The value of $\delta$ in Theorem \ref{t1} satisfies the relation
\[
\delta < ch,
\]
for a suitable constant $c$.
\end{lem}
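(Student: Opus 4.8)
The plan is to convert the instability criterion of Proposition~\ref{prop:suff} into an explicit upper bound on the stability threshold $\delta$ of Theorem~\ref{t1}, by exhibiting, for every sufficiently small $h$, an unstable time step of size proportional to $h$. The logical mechanism is simple: $\delta$ is by definition a value below which \emph{all} the polynomials \eqref{eq:chi} (one for each index $i$) have their roots in the open unit disc; hence if I can produce a single index $i$ and a single $\Delta t\le ch$ for which \eqref{eq:chi} possesses a root of modulus greater than one, then $\Delta t=ch$ cannot lie below $\delta$, and therefore $\delta\le ch$.

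First I would specialize the instability condition \eqref{eq:unst2} to the highest mode $i=N$, for which $\mu_N=\mu_{\min}$ and $\lambda_N=\lambda_{\max}$; this is precisely condition \eqref{instability_two}, so the scheme is unstable as soon as $\rho_sH_s<\overline{\eta}$. I would then prescribe $\Delta t=ch$ and substitute the asymptotics \eqref{eq:eigen}, writing $\mu_{\min}=a_1h\,(1+o(1))$ and $\lambda_{\max}=a_2h^{-2}(1+o(1))$ with $a_1,a_2>0$. A short computation, in which the only surviving contribution is $\Delta t^2\psi\lambda_{\max}=c^2\psi a_2+o(1)$ balanced against the factor $\Delta t$ in the denominator, yields
\[
\lim_{h\to 0}\overline{\eta}\big|_{\Delta t=ch}=\frac{\alpha c^{3}\psi a_2}{16\rho_f a_1+4\alpha c}.
\]

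The decisive observation is that the right-hand side grows like $\tfrac14 c^{2}\psi a_2$ as $c\to\infty$, and in particular diverges. Consequently, for any fixed positive $\rho_s,\rho_f,\alpha,H_s$ I can select $c$ large enough that this limit strictly exceeds $\rho_sH_s$; by the definition of the limit there is then an $h_0>0$ such that $\overline{\eta}\big|_{\Delta t=ch}>\rho_sH_s$ for all $0<h<h_0$, so mode $N$ is unstable at $\Delta t=ch$ and hence $\delta\le ch$. Passing from $c$ to $c+1$ upgrades this to the strict inequality in the statement. The step I expect to require the most care is the bookkeeping that links the two theorems: I must verify that the root $\bar y<-1$ produced by Proposition~\ref{prop:suff} and the four-roots-in-the-disc conclusion of Theorem~\ref{t1} concern the same polynomial and the same index, so that instability of the single mode $i=N$ is genuinely incompatible with $\Delta t<\delta$. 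Once this is settled, the divergence of the limit in $c$ is exactly what makes the bound hold uniformly across all admissible parameters, confirming the heuristic of point~4 in Section~\ref{sec:disc}.
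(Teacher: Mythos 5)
Your proof is correct, and it rests on exactly the same two pillars as the paper's: the instability condition \eqref{instability_two} extracted from Proposition \ref{prop:suff}, and the eigenvalue asymptotics \eqref{eq:eigen}. The organization, however, is different. The paper argues by contradiction: it supposes a sequence $h_j$ with $\delta_j/h_j\to+\infty$, takes $\Delta t_j=\delta_j/2$ (stable by Theorem \ref{t1}), notes that then $h_j=o(\Delta t_j)$, and invokes $\lim_{\Delta t,h\to 0}\overline{\eta}=+\infty$ in that regime to contradict stability. You instead argue directly along the ray $\Delta t=ch$: you compute the explicit limit $\overline{\eta}\to\alpha c^{3}\psi a_2/(16\rho_f a_1+4\alpha c)$, observe it grows like $c^{2}\psi a_2/4$, and choose $c$ large enough that it exceeds $\rho_sH_s$, so that the $N$-th mode is unstable at $\Delta t=ch$ and hence $\delta\le ch$ for all sufficiently small $h$. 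Your route buys two things: an explicit, computable constant $c$ in terms of $\alpha$, $\rho_f$, $\rho_sH_s$, $\psi$ and the constants in \eqref{eq:eigen}, which the contradiction argument does not provide; and a cleaner logical footing, since the negation of the thesis in the paper only yields a sequence with $\delta_j/h_j\to+\infty$, not necessarily with $h_j\to 0$, so the paper's appeal to the limit of $\overline{\eta}$ as $\Delta t,h\to 0$ tacitly needs one more (easy) step that your construction sidesteps. The bookkeeping point you flag is indeed the one thing to verify, and it goes through: Proposition \ref{prop:suff} produces a real root $\bar y<-1$ of \eqref{eq:chi} for the maximizing index, while Theorem \ref{t1} asserts that all four roots lie in the open unit disc for every $i=1,\dots,N$, so the two conclusions are incompatible for the same polynomial. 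The only caveat, shared with the paper's own proof, is that the bound $\delta<ch$ is established for $h$ below some threshold $h_0(c)$, i.e., asymptotically as $h\to 0$, and that the argument uses $\psi>0$ so that the term $\Delta t^{2}\psi\lambda_{\max}$ survives in the limit.
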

\begin{proof}
Fix $\alpha>0$. Call $\delta=f(h)$ the relationship between $\delta$ and $h$. 
Let assume that the thesis is not true, that is that there exists
a sequence of values $h_j$ and correspondingly $\delta_j=f(h_j)$, such that 
\begin{equation}
\label{eq:lemma}
\yy\lim_{j\to+\infty}\frac{\delta_j}{h_j} = +\infty.
\end{equation}
From Theorem \ref{t1}, we have that stability
is guaranteed if $\Delta t=\Delta t_j=\delta_j/2$.
On the other side, from \eqref{eq:lemma} and the choice of $\Delta t$ above, we have that
$h_j=o(\delta_j)=o(\Delta t_j)$.
From \eqref{instability_two}, we have that in the range
$h=o\left(  \Delta t\right)  $, $\lim_{\Delta t,h\rightarrow0}\overline
{\eta}=+\infty$, obtaining unconditional instability. 
This contradicts the previous finding about stability. This means that the thesis is true.
\end{proof}

\item {\bf The cases $\alpha=+\infty$ and $\alpha=0$}. 
Theorem \ref{t1} holds true for any $\alpha\in(0,+\infty)$. The case $\alpha=+\infty$ corresponds
to the explicit Dirichlet-Neumann scheme. In this case, the polynomial $\chi(y)$ in \eqref{eq:chi}
corresponds to the one found in \cite{causing1} (see Proposition 3 therein), where it is shown that at least one root
has modulus greater than one independently of $\Delta t$
({cf. also \eqref{instability_two}}). 

Regarding $\alpha=0$, we obtain $\chi(y)=\frac{\rho_{s}H_{s}}{\Delta t^{2}}(y-1)^4$.
This means that the solution does not blow up, even if it is not strictly absolute stable.
This is in accordance with the fact that in this case the
numerical solution does not evolve during the time evolution, being always equal to the initial condition
(the same Neumann datum is transferred at the interface).
Thus, accuracy is completely lost.
From this observation, we can argue that
too small values of $\alpha$, even though give stability, 
do not lead to an accurate solution.

\end{enumerate}

\section{Numerical results}

\subsection{Preliminaries}
In this section we present some numerical results with the aim of validating the 
theoretical findings reported in the previous section. 
In particular, we studied the effectiveness of the 
analyses obtained for the simplified models, 
when applied to complete three-dimensional fluid and structure models. 
All the simulations are inspired from 
hemodynamics. This problem is of great interest for our purposes
since it is characterized by a high added mass effect, so that 
the stability of explicit methods is a challenging issue. Moreover, the thickness
of the structure is small enough to make acceptable the use of a membrane model
in the stability analysis of the previous section.    

We considered the coupling between the 3D incompressible Navier-Stokes equations written in 
the Arbitrary Lagrangian-Eulerian formulation \cite{donea2} and the 3D linear infinitesimal elasticity,
see \cite{nobilep1,gigantev1} for all the details. 
For the time discretization, we used the BDF scheme of order 1 for both the 
fluid and structure, 
with a semi-implicit treatment of the fluid convective term, whereas
for the numerical discretization
we employed $P1bubble-P1$ finite elements for the fluid and $P1$ finite elements for the structure.
The fluid domain at each time step is obtained by extrapolation of previous time steps 
(semi-implicit approach \cite{fernandezg2,badiaq1,nobilep2}).
We used the following data: 
fluid viscosity $\mu=0.035\,g/(cm\,s)$, fluid density $\rho_f=1\,g/cm^3$,
structure density $\rho_s=1.1\,g/cm^3$, Poisson ratio $\nu=0.49$, Young modulus $E=3\cdot 10^5\,Pa$,
surrounding tissue parameter for the structure problem
\cite{moireaux1} $\gamma_{ST}=1.5\cdot 10^5\,Pa/cm$.

The fluid domain is a cylinder with length $L=5\,cm$ and radius $R=0.5\,cm$,
whereas the structure domain is the 
external cylindrical crown with thickness $H_s=0.1\,cm$. We considered
a couple of meshes with
4680 tetrahedra and 1050 vertices for the fluid and 1260 vertices for the structure (mesh I).
Another couple of meshes (mesh II) has been obtained by halving the values of the space
discretization parameter.

At the inlet we prescribed a Neumann condition given by the following pressure function
\[
P_{in} = 
\yy 500\left(1-\cos\left(\frac{2\pi t}{0.01}\right)\right)\,\,dyne/cm^2,\quad t\le T=0.04\,s,\\ 
\] 
with absorbing resistance conditions at the outlets \cite{nobilev1, nobilep1}.

All the numerical results have been obtained with the parallel Finite Element library \verb|LIFEV| \cite{lifev}.

\subsection{On the stability of the numerical solution}
In the first set of numerical experiments, we study the stability of the solution.
The time discretization parameter is $\Delta t = 0.0005\,s$. In Figure \ref{fig:press} we report the mean pressure over the middle cross section of the cylinder
obtained for different values of $\alpha$ and with an implicit method.  
\begin{figure}[!h]
\centering
\includegraphics[width=14cm,height=8.cm]{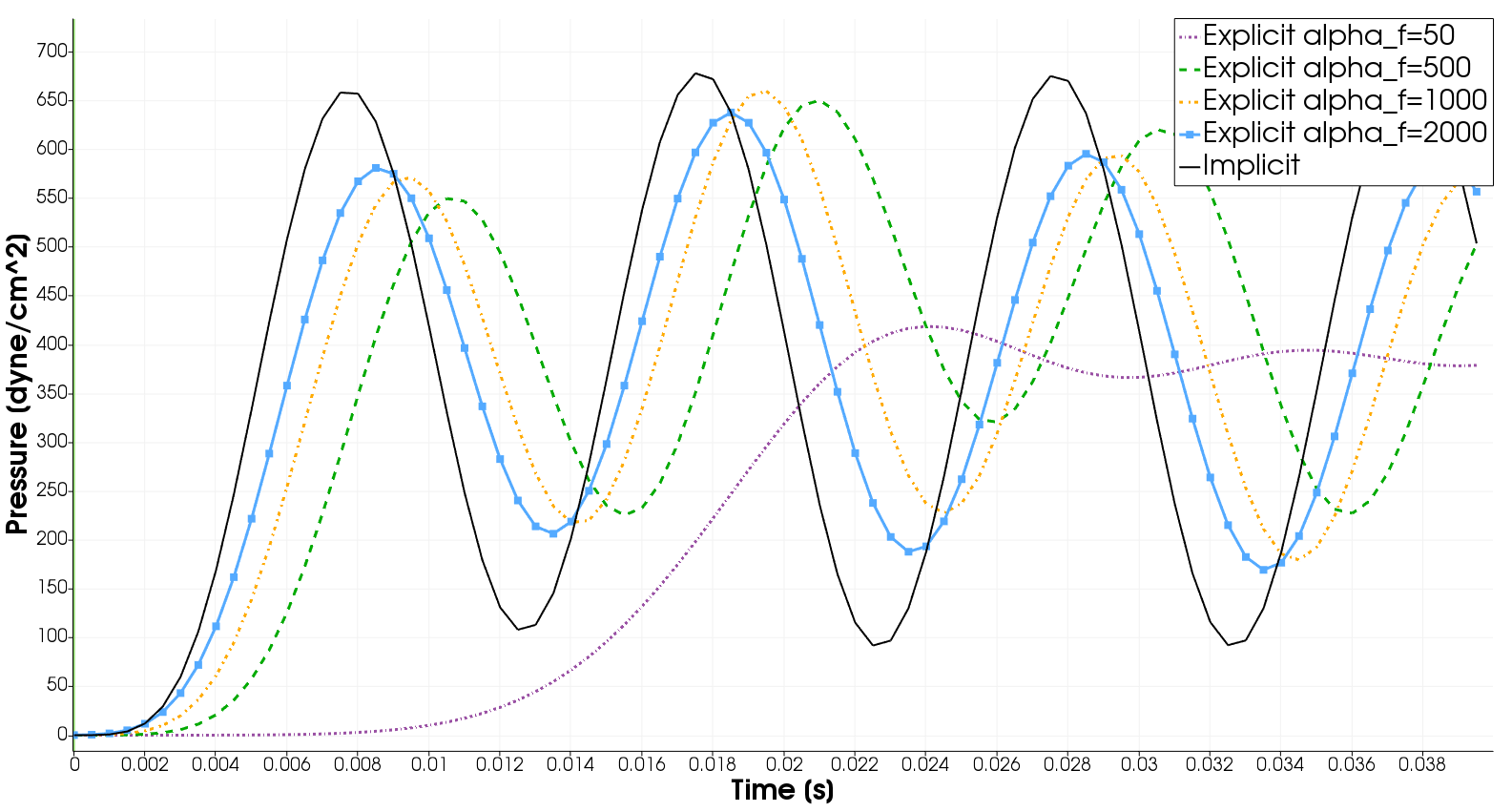}
\caption{Fluid mean pressure over the middle cross section ($z=L/2=2.5\,cm$) for different values
of $\alpha$.}
\label{fig:press}
\end{figure}
From these results we observe stability of the numerical solution obtained with the explicit Robin-Neumann method
for some values of the interface parameter $\alpha$.
The accuracy seems to deteriorate for decreasing values of $\alpha$ (cf. point 5 in the Discussion
of Sect. \ref{sec:disc}). 
Notice that with $\alpha=2500$ the numerical solution (not reported here) blows up.
The same happens for bigger values of $\alpha$.
This is consistent with the result proven in Proposition 1, 
for which an unstable solution is obtained for $\alpha$ greater than a threshold
when the added mass effect is large enough,
see \eqref{instability_two_bis}-\eqref{instability_three_bis}
(cf. also point 1 in the Discussion
of Sect. \ref{sec:disc}).

In Table \ref{tab1} we indicate if stability is achieved for different 
space and time discretizations parameters. 
\begin{table}[!h]
\begin{center}
\begin{tabular}{ccc|c}
Mesh & $\Delta t$ & $\alpha$ & Stability \\
\hline
I & $0.625\cdot 10^{-4}$ & 4689 & OK \\
I & $1.25\cdot 10^{-4}$ & 4689 & NO \\
I & $1.25\cdot 10^{-4}$ & 2000 & OK \\
I & $5\cdot 10^{-4}$ & 2000 & OK \\
II & $5\cdot 10^{-4}$ & 2000 & NO \\
\end{tabular}
\caption{Stability of the explicit RN scheme for different values of the parameters.}
\label{tab1}
\end{center}
\end{table}
From the first two rows, we observe that, given a value of $\alpha$, stability is achieved 
only if $\Delta t$ is small enough. This was expected from the theoretical findings, see 
Theorem \ref{t1}.
From the first three lines, we observe that the value of $\Delta t$ needed to have stability
could be increased when $\alpha$ is decreased. This is in accordance with  
point 2 in the Discussion of Sect. \ref{sec:disc}. Finally, from the last two rows, we find that stability
is achieved for given values of $\alpha$ and $\Delta t$ if the mesh is not too fine, thus confirming the observation
made in point 3 of the Discussion of Sect. \ref{sec:disc}. 

\subsection{On the accuracy of the numerical solution}
Two very interesting topics that are not yet discussed are: 
i) how to select not
empirically a reasonable value of $\alpha$ that should fall in the range of stability and
ii) which is the dependence of the accuracy of explicit Robin-Neumann methods on $\alpha$.
In what follows we provide some preliminary hints about these two points.

The idea we propose is to use an optimal value of $\alpha$ which should guarantee
efficient convergence of the implicit Robin-Neumann scheme. Since such a value
makes the convergence factor of the method (thus the error at each iteraton) small, 
we expect that the use of the same value
of $\alpha$ for the explicit counterpart of the method should reduce the error accumulated at each 
time step. 

In particular, we propose here to use the optimality result proven in \cite{gigantev1}
for the implicit Robin-Robin method in the case of cylindrical interfaces. This leads to two optimal values $\alpha^{opt}_f$
and $\alpha^{opt}_s$ in the Robin interface conditions. Since in the hemodynamic regime 
the convergence properties of the
implicit Robin-Neumann scheme with the optimal value $\alpha^{opt}_f$ are very similar to that of the
''optimal'' implicit Robin-Robin scheme \cite{gerardon1}, 
we propose here to use $\alpha=\alpha^{opt}_f$ as an effective value that should provide
a stable and accurate solution for the explicit Robin-Neumann scheme.

We consider the same numerical experiment as above, 
with time discretization parameter $\Delta t = 0.001\,s$. 
In Figure \ref{fig:press2}, we report the mean pressure over the middle cross section 
for $\alpha=\alpha_f^{opt}=2250$ and $\alpha=1500, 3000$. 
\begin{figure}[!h]
\centering
\includegraphics[width=14cm,height=8.cm]{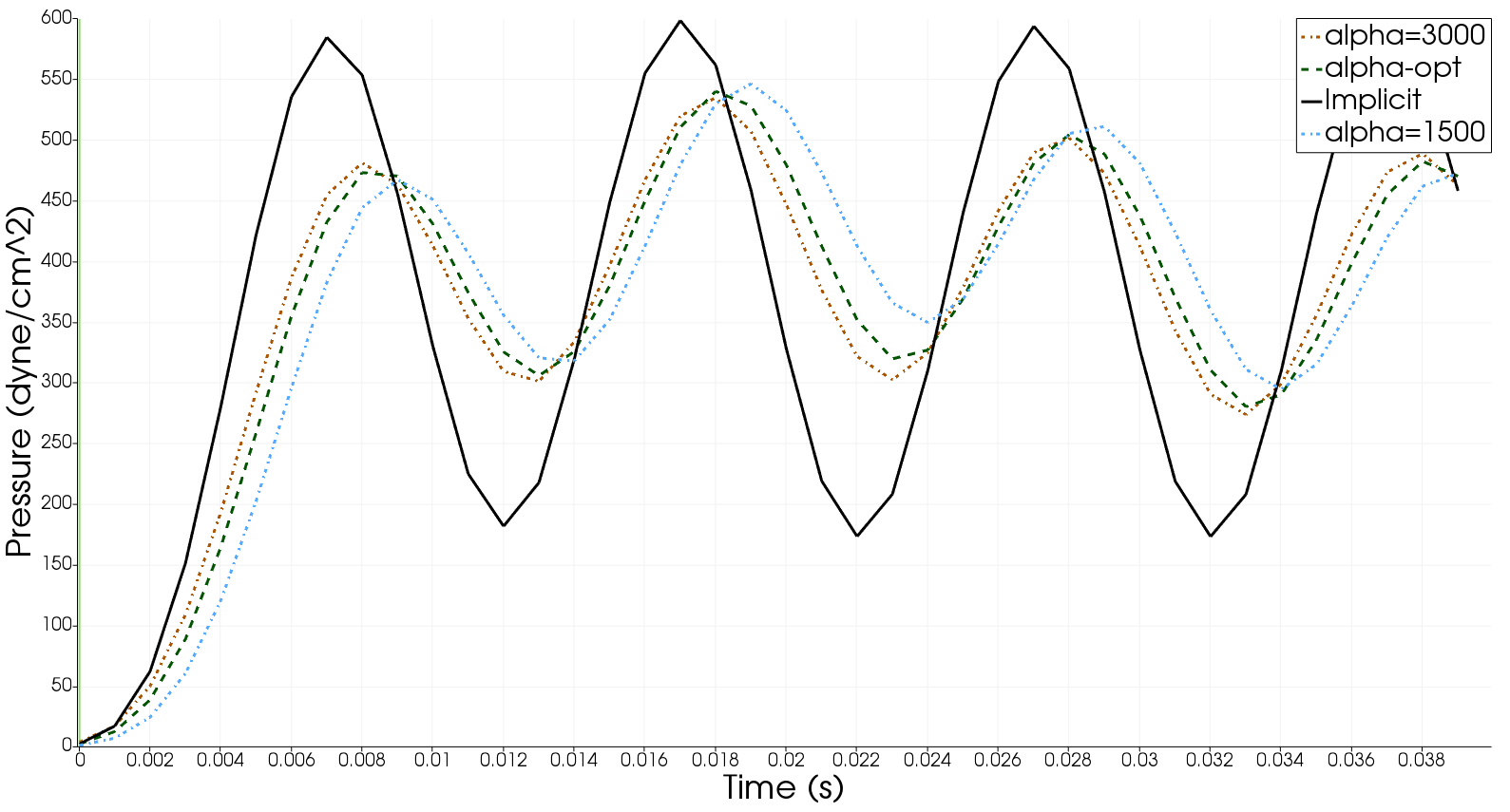}
\caption{Fluid mean pressure over the middle cross section ($z=L/2=2.5\,cm$) for different values
of $\alpha$ close to $\alpha_f^{opt}$.}
\label{fig:press2}
\end{figure}

For $\alpha>3500$
the solution blows up, whereas for $\alpha<1500$ the accuracy deteriorates. We observe that the solution obtained with the value $\alpha_f^{opt}$ proposed a priori is very
close to the optimal one ($\alpha=3000$) found empirically. This result highlights that an effective choice
of $\alpha$ that guarantees stability and accuracy is possible, at least in the case of a cylindrical domain.

\begin{rem}
Notice that, from empirical observations running the implicit Robin-Robin scheme, 
we found that $\alpha_f^{opt}$ is greater
than zero and does not blow up when $\Delta t\to 0$. Thus, 
from point 2 of the Discussion of Sect. \ref{sec:disc}, we have that $\delta\le c(\alpha^{opt})^{-1}$ 
still makes sense also
when $\Delta t\to 0$, since $\delta>0$. 
\end{rem}

\subsection{Conclusions}
In this work we have proposed an analysis of stability of a loosely coupled scheme 
of Robin-Neumann type for fluid-structure interaction, possibly featuring a large added mass effect.
In order to make the results found in this paper reliable for concrete applications, 
we need now to apply them to realistic geometries and boundary data. 
This would make the explicit Robin-Neumann scheme an effective strategy to be considered
for example in hemodynamics, where the added mass effect is elevated. 
This is actually outside the aims of the paper which is focused on the analysis of sufficient conditions for conditional stability and instability. For this reason, we are currently studying this topic for a future development 
of this work. What in our opinion is promising is that our numerical experiments 
in the hemodynamic regime highlighted
an excellent agreement with the theoretical findings and lead to accurate solutions in the range of stability.

\bibliographystyle{siam}
\bibliography{gv3-SINUM}

\end{document}